\newtheorem{theorem}{Theorem}[section]
\newtheorem{lemma}[theorem]{Lemma}
\newtheorem{corollary}[theorem]{Corollary}
\newtheorem{conjecture_env}[theorem]{Conjecture} 
\newtheorem{alphtheorem}{Theorem}
\theoremstyle{definition}
\newtheorem*{ack}{Acknowledgements}
\newtheorem*{con}{Conventions}
\newtheorem{remark}[theorem]{Remark}
\newtheorem{definition}[theorem]{Definition}
\numberwithin{equation}{section} \numberwithin{figure}{section}
\DeclareMathOperator{\an}{an}
\DeclareMathOperator{\Hol}{Hol}
\DeclareMathOperator{\Hom}{Hom}
\newcommand{\bD}{{\bf\Delta}}
\newcommand\CC{\mathbb{C}}
\definecolor{orange}{rgb}{1,0.5,0}
\title[Finiteness  of pointed maps to moduli spaces]{Finiteness of pointed maps to moduli spaces of polarized varieties}
\author{Ariyan Javanpeykar}
\address{PO Box 9010 \\ 
6500GL Nijmegen\\
The Netherlands.}
\email{ariyan.javanpeykar@ru.nl}
\author{Steven Lu}
\address{D\'epartement de math\'ematiques
Universit\'e du Qu\'ebec \`a Montr\'eal
Case postale 8888, succursale centre-ville
Montréal (Qu\'ebec) H3C 3P8}
\email{lu.steven@uqam.ca}
 \author{Ruiran Sun}
 \address{School of Mathematical Sciences, Xiamen University, Xiamen 361005, China}
\email{ruiransun@xmu.edu.cn}
\author{Kang Zuo}
 \address{ 
 School of Mathematics and Statistics, Wuhan University, Luojiashan, Wuchang, Wuhan, Hubei, 430072, P.R. China;
Institut für Mathematik, Universität Mainz, Mainz 55099, Germany.}
\email{zuok@uni-mainz.de}
\subjclass[2010]
{} 
\keywords{Hyperbolicity, moduli spaces, polarized varieties,  boundedness,  rigidity}
\begin{document}

\begin{abstract}    
We establish a finiteness result for pointed maps to the base space $U$ of a smooth projective family of  varieties  with  maximal variation in moduli.  For its proof, we establish   the rigidity of pointed maps to  a (not necessarily compact)   variety which is hyperbolic modulo a proper closed subset. Together with Viehweg's hyperbolicity conjecture on the bigness of log-canonical bundles of moduli spaces, resolved by Campana--P\u{a}un, we derive an optimal dimension bound on the Hom scheme  from a curve to $U$ among other applications.
\end{abstract}

\maketitle

\thispagestyle{empty}

 \section{Introduction}   
  Compact complex hyperbolic varieties $X$ are known to enjoy many nice properties of which most are reflected in
  $\Hom(C,X)$ with $C$ a complex projective curve.  For example, from Theorem~5.3.9, Theorem~6.4.1.(2) and Corollary~6.6.11 of \cite{KobayashiBook}, one has that: \smallbreak
\begin{enumerate} 
\item   $\mathrm{Hom}(C,X)$ is \emph{compact} (i.e., boundedness holds).
\item  $\mathrm{Hom}(C,X)$ is hyperbolic (i.e., it \emph{inherits} its hyperbolicity from $X$). 
\item   The {dimension} of the subspace of non-constant morphisms $\mathrm{Hom}^{nc}(C,X)$ is strictly smaller than the dimension of $X$ (i.e., we have a \emph{dimension drop}).
\end{enumerate}
 Crucial to the proofs of (2) and (3) is the rigidity of pointed maps to $X$ (as proven in  \cite[Theorem~5.3.10]{KobayashiBook}), where we say that a pointed map $f\colon (C,c)\to (X,x)$ is rigid if  the connected component of the pointed Hom scheme $\mathrm{Hom}((C,c),(X,x))$ containing $f$ is a  singleton  (see Definition \ref{definition:rigidity} for a more precise definition).   \smallbreak

In this paper, we show these properties for a class of non-compact objects that have natural hyperbolicity properties and, in particular, for certain base spaces of families of (polarized) varieties. \smallbreak

 More specifically, we study these properties for the base space of a smooth projective family of     varieties with semi-ample canonical bundle which has maximal variation. In this setting,  hyperbolicity is well-known (Theorem \ref{thm:deng})  and   boundedness of Hom schemes as well   (Theorem \ref{thm:bounded}).   Our new results include a finiteness result for pointed maps (Theorem \ref{thm1}), the \emph{inheritance} property of hyperbolicity as well as the optimal \emph{dimension bound} or \emph{dimension drop} (Theorem \ref{thm:dimension_drop_intro}).  \smallbreak
 
  In fact,  our results for moduli spaces rely on a new result concerning (Kobayashi) hyperbolic varieties (see Theorem \ref{thm:nondeg_kob_implies_rel_compactness}): If $Z\subset U$ is a closed subset of an algebraic variety $U$ and $U$ is hyperbolic modulo $Z$, then every pointed map $(C,c)\to (U,u)$ with $u\not\in Z$ is rigid.  Note that this result extends Urata's rigidity theorem \cite{Urata} to the setting of (not necessarily compact) pseudohyperbolic varieties (going beyond the well-known extension to hyperbolically embeddable varieties).

 \subsection{Moduli spaces of varieties with semi-ample canonical bundle}    
In what follows, a variety will   be an irreducible, reduced quasi-projective scheme over $\mathbb{C}$.  \smallbreak

 Consider a  variety  $U$ parametrizing a ``maximally varying'' family of smooth projective varieties with semi-ample canonical bundle. 
  More precisely,  we assume throughout that there is a smooth projective morphism $\phi: V\to U$ whose  fibres $V_u$ (with $u\in U$) are projective varieties with semi-ample canonical bundle (hence  are good minimal models) such that the family $V\to U$ has \emph{maximal variation in moduli}, i.e., $\mathrm{Var}(f)=\dim U$ as defined in \cite[p.~1]{Vie83}, or equivalently, that the Kodaira-Spencer map $T_u U\to H^1(V_u, TV_u)$ is injective for a sufficiently general point  $u\in U$.    
 
 \begin{alphtheorem}\label{thm1}  With the assumptions as given above, there is a proper Zariski-closed subset $\Sigma  \subsetneq U$ such that, for every pointed variety $(Y,y)$ and  $u\in U\setminus \Sigma$, the set of pointed morphisms $f\colon (Y,y)\to (U,u)$ is finite.
 \end{alphtheorem}

Theorem \ref{thm1} is proven by combining the boundedness of the Hom scheme of all maps from $Y$ to $U$ due to Viehweg--Zuo (see Theorem \ref{thm:bounded} below) with a new rigidity theorem for pointed maps (see Section \ref{section:final}).

 Note that one cannot expect $\Sigma$ in general to be empty in the theorem.  The hypothesis being invariant under proper modifications of $U$, one has for example to avoid maps into the exceptional locus of the blowup of a smooth point of $U$ for the conclusion to hold.   \smallbreak

In addition, the restriction on $f$ to be pointed cannot be lifted. Indeed, 
the Hom scheme of maps from a smooth curve $C$ to $U$ not factoring through $\Sigma$  is   
rarely zero-dimensional (i.e.,   rigidity rarely holds).    For example,  for $U$ a suitable curve, there are non-rigid non-isotrivial  polarized families of abelian varieties (even without isotrivial factors) \cite{FaltingsArakelov}, K3 surfaces
\cite[\S 6]{SaitoZucker}, and canonically polarized surfaces \cite[Example~1.13]{KovacsStrong}.    \smallbreak

 When   $U$   is as in Theorem \ref{thm1},   combining    the main result of \cite{DengLuSunZuo} with Theorem \ref{thm1} yields  a  proper closed subset $\Sigma' \subsetneq  U$ such that, for every pointed variety $(Y,y)$ and every $u$ in $U\setminus \Sigma'$, the set of \emph{holomorphic} maps $f\colon Y^{}\to U^{}$ with $f(y) = u$ is finite.   \smallbreak

If  the fibres of the family $V\to U$ satisfy infinitesimal Torelli, then
Theorem \ref{thm1} can alternatively be deduced from properties of period maps -- specifically  Deligne's finiteness result for monodromy representations and the Theorem of the Fixed Part; see  \cite[Theorem~1.7]{JLitt}.   In particular, the statement of   Theorem \ref{thm1} is not new if the fibres of  $V\to U$ are Calabi-Yau  varieties or abelian varieties.   \smallbreak

  Theorem \ref{thm1} is inspired by standard hyperbolicity properties suggesting that well-known moduli spaces of varieties should satisfy a pointed version of the Shafarevich conjecture (cf. \cite[Conjecture~1.5]{JSZ}).
We mention that in \cite{JSZ}, finiteness results are proved when sufficiently many base points on the source are fixed.  However, the approach there  
is completely different from here: Our one pointed finiteness result here is a consequence of a  general rigidity property for   pseudohyperbolic varieties (see Theorem \ref{thm:geometric_hyperbolicity_intro} or Theorem \ref{thm:from_kob_to_pointed_rigidity}).  In \cite{JSZ}, existing Viehweg--Zuo-type results and constructions are  adapted to the setting of $N$-pointed maps and hence a large part of \cite{JSZ} focuses on the construction of suitable Viehweg--Zuo sheaves.   \smallbreak

 Another motivation for this paper comes from the following semipositivity conjecture of Viehweg and Zuo (\textit{cf.} \cite[Problem~1.5]{VZ02}): Let $D$ be a simple normal crossings divisor on a smooth projective variety $X$.  If $\Sigma\subset X$ is a closed subset, we say that the pair $(X,D)$ is \emph{semi-positive modulo $\Sigma$} if, for every smooth projective curve $C$ and morphism 
$\bar{\gamma}\colon \bar{C}\to X$ such that $\bar{\gamma}(\bar{C})\not\subset \Sigma$,  any quotient sheaf of $ \bar{\gamma}^*\Omega^1_X(\log D) $ has non-negative degree.

\begin{conjecture_env}[Viehweg--Zuo]  \label{conj:vz} 
Let $X$ be a smooth projective variety, $D$ be a simple normal crossings divisor and $U = X \setminus D$.
If $U$ is the base space of a smooth  maximally varying family of projective varieties with semi-ample canonical bundle then there is a proper closed subset $\Sigma\subsetneq X$ containing $D$ such that $(X,D) $ is semi-positive modulo $\Sigma$.
\end{conjecture_env}

We note that Conjecture~\ref{conj:vz} implies the rigidity of pointed maps, so that our main result (Theorem~\ref{thm1}) can be viewed as supporting
evidence for Conjecture~\ref{conj:vz}. Indeed, with the notation as given, a non-rigid pointed morphism $\gamma$ from a pointed curve $(C,c)$ to $(U,u)$ would yield 
a nonzero element of
\[
\mathrm{H}^0\left(\bar{C},  \mathcal{O}_{\bar{C}}(-c) \otimes \bar{\gamma}^* T_X(-\log D)\right)
= \mathrm{Hom}\left(\mathcal{O}_{\bar{C}}(c), \bar{\gamma}^* T_X(-\log D)\right),
\]
where $\bar{\gamma}\colon \bar{C}\to X$ is the extension of $\gamma$ to a smooth compactification $\bar{C}$ of $C$, contradicting the semipositivity of $(\bar{\gamma})^\ast\Omega^1_X(\log D)$. \smallbreak

Although Conjecture \ref{conj:vz} remains open (even when $D=\emptyset$),  it is supported by Campana and P\u{a}un's complete solution to \emph{Viehweg's hyperbolicity conjecture} (see Theorem \ref{thm:CP}). Indeed, in \cite{CP} they proved the pseudo-effectivity of  $\Omega^1_X(\log D)$. \smallbreak

Conjecture \ref{conj:vz} is known if the fibres of $V\to U$ satisfy infinitesimal Torelli (see \cite[Theorem~0.1.(i)]{ZuoNeg}); by \emph{loc. cit.}, if $U$ admits a generically immersive period map, then $(X,D)$ is semi-positive modulo some proper closed subset of $X$.

 \subsection{Boundedness}  Although our main contribution in this paper is the rigidity of certain pointed morphisms, we will naturally need to invoke  the ``boundedness'' of certain families of varieties to deduce our finiteness result (Theorem \ref{thm1}); we refer the reader to Section \ref{section:hom_scheme} for the definition of the Hom schemes discussed below.     
 
   \begin{definition}
   Let $U$ be a (quasi-projective) variety.  Let $\Sigma\subset U$ be a Zariski closed subset. We say that $U$ is \emph{bounded modulo $\Sigma$} if, for every smooth quasi-projective curve $C$,   
   \[{\Hom}(C,U)\setminus {\Hom}(C,\Sigma)\] is of finite type (hence quasi-projective) over $\CC$.  \end{definition}
   \begin{definition}\label{def:bounded}  A variety $U$ is \emph{pseudo-bounded} (resp. \emph{bounded}) if there exists a proper closed subset $\Sigma\subsetneq U$ such that $U$ is bounded modulo $\Sigma$ (resp.  bounded   modulo $\emptyset$). 
   \end{definition}

   The notion of  boundedness in the setting that $U=X$ is a projective variety coincides with the earlier notion of weak boundedness introduced by Kov\' acs-Lieblich  \cite{KovacsLieblich} (cf. \cite{JKa} and \cite{vBJK}). Hence, if $U=X$ is projective and Brody hyperbolic, then $U$ is   hyperbolic and bounded (see \cite[\S 5]{KobayashiBook}).  Such varieties arise naturally in the study of families of polarized varieties.
Indeed,  let $U$ be a variety and suppose that there exists a family $ V \to U$ of smooth projective varieties with ample canonical bundle such that, for every variety $V_0$, the set of $u$ in $U$ with $V_u\cong V_0$ is finite.  

Then \cite{VZ03} shows that the variety $U$ is Brody hyperbolic and thus bounded if $U$ is projective. 
Even if $U$ is not projective, it follows from the following theorem of   Viehweg--Zuo that $U$ is bounded \cite[Theorem~6.2.(ii)]{VZ02}   (see also \cite[Theorem~10.8]{Kovacs}, \cite{KovacsLieblich}, and \cite{AscherTaji}   for related  results).

\begin{theorem}[Viehweg--Zuo, 2002]\label{thm:bounded}  Let $U$ be a   variety. Assume that there exists 
  a smooth projective family $f\colon V\to U$ of varieties with semi-ample canonical bundle which has  maximal variation in moduli.    
 Then $U$ is pseudo-bounded.
\end{theorem}

\subsection{A criterion for the rigidity of pointed maps} The key to Theorem \ref{thm1}  is the rigidity of pointed maps  $(C,c)\to (U,u)$ for $C$ a smooth curve. \smallbreak 

Consider the case when the base space $U$ is compact in Theorem \ref{thm1}.  One can appeal to   a well-known criterion for rigidity of  pointed  maps: if $Y$ and  $U$ are proper varieties,  then a pointed map $f\colon(Y,y)\to (U,u)$ is rigid   if  (and only if) the connected component of ${\Hom}((Y,y),(U,u))$  containing $f$ is proper, cf. \cite[\S 4, p. 43]{MumAb}. This is a part of Mori's bend-and-break, see for example \cite[\S 3]{Debarrebook}, which says that this fails to be the case if and only if there is a rational curve in $U$ containing $u$, i.e. a nonconstant morphism ${\mathbb P}^1\to U$. In particular, from well-known hyperbolicity results on our moduli spaces considered, all rational curves are contained in some proper closed subset of $U$ in our case of maximal variation, so that 
our rigidity theorem readily follows when  $Y$ and $U$ are compact. 
 \smallbreak

However, most base spaces of families of smooth varieties, such as $U$, are not proper. When attempting to generalize this classical rigidity criterion to the quasi-projective case, one quickly encounters difficulties in controlling how rational curves, arising from bend-and-break, intersect the boundary of the moduli space. This challenge reminds us of that of the Log Bend-and-Break Conjecture of Keel-McKernan \cite[Conjecture~1.11]{KeelMcKernan}, which remains unresolved for dimensions greater than two. The difficulties for the related Conjecture~\ref{conj:vz} are also in a sense quite similar to those in Campana--P\u{a}un's proof of Viehweg's hyperbolicity conjecture.  Indeed, in the case that the base space $U$ is proper, the pseudo-effectivity of the  cotangent bundle of a moduli space can be established using Miyaoka's generic semi-positivity theorem combined with the hyperbolicity result of Viehweg--Zuo (see  Patakfalvi \cite{Patakfalvi}). However, to extend this result to the non-proper setting, Campana--P\u{a}un resorted to Bogomolov-McQuillan's criterion for the algebraicity of foliations as well as an orbifold version of Viehweg's weak positivity theorem for direct image sheaves; see \cite{CP}. \smallbreak

When $U$ is not necessarily proper, our   idea in obtaining the rigidity of pointed maps $(C,c)\to (U,u)$ with $C$ a curve   is to restrict to a sufficiently small   Euclidean neighbourhood $\Delta$ of $c$ in $C$.   
This allows us to regard the Hom scheme $\Hom((C,c),(U,u))$ as a subspace of the space of holomorphic maps $\mathrm{Hol}(\Delta,U^{})$ (or even of $\mathrm{Hol}((\Delta,c),(U^{},u))$). Then,   $\Hom((C,c),(U,u))$ is easily seen to be zero-dimensional if $\Hom((C,c),(U,u))$ is relatively compact in $\mathrm{Hol}(\Delta,U^{})$.

\begin{alphtheorem}  \label{thm:rig_via_ac}
Let $(U,u)$ be a pointed variety, $(C,c)$ a smooth  quasi-projective pointed curve. If there is a Euclidean open neighbourhood $\Delta\subset C^{}$ of $c$ such that   \[\mathrm{Hom}((C,c),(U,u))\]  is  \textbf{relatively compact} in $\mathrm{Hol}(\Delta, U^{})$ with the compact-open topology, then every pointed morphism $(C,c)\to (U,u)$ is rigid, i.e., $\mathrm{Hom}((C,c),(U,u))$ is zero-dimensional.
\end{alphtheorem}

  It turns out that   relative compactness of the space of pointed morphisms  after restriction to a small enough neighbourhood is intimately related to the nondegeneracy of the Kobayashi pseudometric.  Although the  idea of restricting to smaller discs is classical in hyperbolic geometry, it was recently used in Yamanoi's seminar paper \cite{YamanoiKob}  to prove a  key technical  relative compactness lemma of a certain family of maps from a given algebraic curve to a general type subvariety of  an abelian variety.

 \subsection{From hyperbolicity to pointed rigidity}

Following Kobayashi \cite[\S 3.2]{KobayashiBook}, we recall that the \emph{Kobayashi pseudometric} $d_U$ on a complex space $U$ is the largest pseudometric on $U$ satisfying the distance-decreasing property with respect to holomorphic maps from the unit disc 
$
\bD \subset \mathbb{C}
$
equipped with the Poincaré distance $d_\bD$, classically defined as the path-integrated form of the Kähler metric norm
$
\frac{|dz|}{1 - |z|^2}.
$  We say that $U$ is \emph{hyperbolic} if $d_U$ is a metric on $U$. More generally,   for $Z\subset U$ a subset, we say that $U$ is \emph{hyperbolic modulo $Z$} if $d_U$ is nondegenerate on $U\setminus Z$.  We say that $U$ is \emph{pseudohyperbolic} if $U$ is hyperbolic modulo a proper closed subset $Z\subsetneq U$. 

We   prove that Kobayashi's notion of hyperbolicity implies the relative compactness of moduli spaces of pointed maps.

\begin{alphtheorem} \label{thm:almost_compactness} 
Let $U$ be a variety and let $\Sigma\subsetneq U$ be a proper closed subset such that $U$ is hyperbolic modulo $\Sigma$.  Let $u\in U\setminus \Sigma$. 
 Then, for any smooth quasi-projective pointed curve $(C,c)$ and $u\in U\setminus \Sigma$, there is a Euclidean open neighbourhood  $\Delta\subset C$  of  $c$ such that $\mathrm{Hom}((C,c),(U,u))$ is relatively compact in the space $\mathrm{Hol}(\Delta, U^{})$ of holomorphic maps $\Delta \to U^{}$ endowed with the compact-open topology.
\end{alphtheorem}

Theorem \ref{thm:almost_compactness} is proven in Section \ref{section:compactness} and  arguably forms the technical core of this paper.  In fact,  we prove more precise results showing that the nondegeneracy of the Kobayashi pseudometric near a point  $u$ is \emph{equivalent} to the relative compactness of the moduli space of pointed maps from small enough discs (formulated appropriately); see Theorem \ref{thm:equivs}    for a precise statement.  

Combining Theorem \ref{thm:rig_via_ac} and Theorem  \ref{thm:almost_compactness}, we obtain the rigidity of pointed maps to a pseudohyperbolic variety (see Theorem \ref{thm:from_kob_to_pointed_rigidity}).  A standard slicing argument (Lemma \ref{lem:geom_hyp_on_curves}) then leads to   the following useful criterion for   the  finiteness of pointed maps:

   \begin{alphtheorem} \label{thm:geometric_hyperbolicity_intro} Let $U$ be a variety and let $Z\subset U$ be a closed subset.  Suppose that $U$ is hyperbolic modulo $Z$ and bounded modulo $Z$.  Then, for every pointed variety $(Y,y)$ and $u\in U\setminus Z$, the set of pointed morphisms $(Y,y)\to (U,u)$ is finite.  
   \end{alphtheorem}

Theorem \ref{thm1} is  proven by combining  Theorem \ref{thm:geometric_hyperbolicity_intro} with the boundedness result of Viehweg--Zuo (Theorem \ref{thm:bounded}) and the fact that $U$ (as in Theorem \ref{thm1}) is pseudohyperbolic (see Theorem \ref{thm:deng}). 
  
 \subsection{Optimal dimension bounds}

We conclude the  paper  with the aforementioned applications to the inheritance property of hyperbolicity and to the optimal  dimension bounds.    Let $P^c$ be the coarse moduli space of Viehweg's moduli functor \cite[\S 7.6]{ViehwegBook}.

\begin{alphtheorem} \label{thm:dimension_drop_intro}    
  Let $U$ be a variety. Assume that there exists a smooth projective    family     $V\to U$ of varieties with semi-ample canonical bundle and a relatively ample line bundle $L$ on $V$ such that the moduli map  $U\to P^c$ associated to the pair $(V,L)$  is quasi-finite.    Let $C$ be a  smooth quasi-projective curve. Then, the following statements hold.
  \begin{enumerate}
  \item  For every smooth quasi-projective curve $C$, the Hom scheme   $\mathrm{Hom}^{nc}(C,U)$ of non-constant maps satisfies
\[
\dim  \mathrm{Hom}^{nc}(C,U)  \leq \dim U - 1, 
\]  
\item   every subvariety of $\mathrm{Hom}(C,U)$  is   pseudohyperbolic,  and
\item    every subvariety of $\mathrm{Hom}(C,U)$  is  of log-general type.
\end{enumerate}       
\end{alphtheorem}

This dimension drop statement in Theorem \ref{thm:dimension_drop_intro} is a consequence of a more general result pertaining to pseudohyperbolic   varieties of log-general type; see Theorem \ref{thm:dimension0b}.   
   Moreover,  this bound  can not be improved.  For example,  if  $U$ is a product of curves $C\times \ldots \times C = C^{\dim U}$, then  $\mathrm{Hom}^{nc}(C,U)$ contains $C^{\dim U-1}$.  On the other hand,  if $C$ is the base space of a non-isotrivial smooth proper family of curves of genus at least two, then  $U$ is the base space of a family of canonically polarized varieties with quasi-finite moduli map.

We note that the dimension drop result (Theorem \ref{thm:dimension_drop_intro}) plays a crucial role in recent work    on (not necessarily pointed) non-rigid subvarieties of the moduli space of canonically polarized varieties; see   \cite[Section~6]{Chenetal}.

\raggedbottom

 \begin{con} The base field is $\CC$ throughout. A variety is a separated irreducible reduced quasi-projective scheme.  A point on a variety is a closed point of the scheme.   If $V$ is a locally finite type scheme over $\mathbb{C}$, we also write $V$ for its  associated complex space $V^{an}$ \cite[\S 12]{SGA1} unless emphasis is needed.
  \end{con}

 \begin{ack}
 The first-named author is grateful to Kenneth Ascher and Behrouz Taji for helpful discussions on boundedness and to Stefan Kebekus for  useful discussions.
 \end{ack}

 \section{Hom scheme of maps}  \label{section:hom_scheme}
Let $\overline{C}$ be a smooth projective curve,  and let $X$ be a  projective variety with an ample line bundle  $L$. Then, the set $\Hom^d(\overline{C},X)$ of morphisms $f\colon\overline{C}\to X$ with $\deg f^\ast L \leq d$ is (the set of closed points on) a quasi-projective scheme; see \cite[\S2]{Debarrebook}. This scheme can be non-reduced and non-irreducible.   Note that  $\Hom(\overline{C},X)=\sqcup_{d\geq 0} \Hom^d(\overline{C},X)$ is a scheme.

Let $C\subset \overline{C}$ and $U\subset X$ be Zariski opens. Write   $D=X \setminus U$.
As every morphism $f\colon C\to U$ extends uniquely  to a morphism $\overline{f}\colon \overline{C}\to X$ with $\overline{f}(C)\not\subset D$, the set $\mathrm{Hom}(C,U)$ of morphisms $f\colon C\to U$ is a  subset of (the closed points of) the scheme ${\Hom}(\overline{C},X)  \setminus {\Hom}(\overline{C},D) $.  If $d$ is an integer, we define $\Hom^d(C,U) := \Hom(C,U) \cap {\Hom}^d(\overline{C}, X)$.
 
 We note that ${\Hom}(\overline{C},D)\subset {\Hom}(\overline{C},X)$ is a closed subset (cf. \cite[\S3]{BJR}). In particular, its  complement ${\Hom}(\overline{C},X)\setminus {\Hom}(\overline{C},D)$ is   open and   the subset   \[\Hom(C,U) \subset {\Hom}(\overline{C},X)\setminus {\Hom}(\overline{C},D)\] is closed. In particular, for every integer $d$, the scheme     ${\Hom}^d(C,U)$ is  quasi-projective.   Hence  ${\Hom}(C,U) = \sqcup_{d\geq 0} {\Hom}^d(C,U)$  is closed in ${\Hom}(\overline{C},X)\setminus {\Hom}(\overline{C},D)$.
   Since the evaluation map $\mathrm{ev}\colon C\times {\Hom}(C,U) \to U$ is a morphism of schemes (hence continuous), for every $c$ in $C$ and $u$ in $U$, the subset $\mathrm{Hom}((C,c),(U,u)) \subset {\Hom}(C,U)$ is closed.   
   
\begin{definition}\label{definition:rigidity} 
 If $f\colon C\to U$ is a morphism with $f(c) = u$,  we let $H_f$ be the (unique)  connected component of  $\Hom((C,c),(U,u))$ containing the point corresponding to $f$.  We say that $f$ is \emph{($1$-pointed) rigid} if $H_f$ is a point.
\end{definition}
 For example,  if ${\Hom}((C,c),(U,u))$ is zero-dimensional,  then every   morphism $(C,c)\to (U,u)$ is rigid.  Moreover, if $U = X$ is projective, then it is well-known that  the following are equivalent (cf.  \cite[Lemma~3.5 and Proposition~3.12]{JKa}). 
 \begin{enumerate}
 \item The projective variety $X$ has no rational curves.
 \item For every smooth projective curve $C$, every   $c\in C$ and every   $x\in X$,   the scheme $\Hom((C,c),(U,u))$ is zero-dimensional.
 \end{enumerate}

 \section{A variant of the rigidity lemma}\label{section:rigidity}
 Let $X$, $Y$ and $Z$ be algebraic varieties (resp. complex  spaces) and let $x_0\in X$.  {Let $X\times Y\to Z $ be a morphism that contracts $\{x_0\}\times Y\to Z$ to a point of $Z$.} The Rigidity Lemma then implies that every fibre $\{x\}\times Y$ is contracted, provided $Y$ is proper (resp. compact); see  \cite[\S 4, p.~43]{MumAb}  (resp.  \cite[Lemma~5.3.1]{KobayashiBook}). The Rigidity Lemma  relies on the fact that regular functions on a proper algebraic variety are constant (resp. that holomorphic functions on a compact connected complex space are constant).Here, we provide a variant of the Rigidity Lemma via the elementary fact that bounded holomorphic functions on an \emph{algebraic} variety are constant \cite[Remark~2.9]{JV}.

  \begin{theorem}[Variant of the Rigidity Lemma]\label{thm:rigidity_lemma}
 Let $(T,t_0), (U,u_0)$ be pointed connected complex   spaces.  Let $Y$ be a complex space on which every bounded holomorphic function is constant (e.g., a  complex algebraic variety).  Let $\Phi\colon T\times Y^{}\to U^{}$ be a morphism that contracts $\{t_0\}\times {Y}$ to $\{u_0\}$ and admits a continuous extension $\bar \Phi\colon T\times \bar Y\to U$   for a  (topological) compactification $\bar Y$ of $Y$.
 Then, for each $t$ in $T$,  the morphism $\Phi$ contracts $\{t\} \times Y$ to a point.
 \end{theorem} 
\begin{proof}  
Let $S := \{ t \in T \mid \Phi(\{t\} \times Y) \text{ is a point} \}$. By assumption, $t_0 \in S$, so $S$ is nonempty.
 f $t_i \in S$ and $t_i \to t$ in $T$, then the restriction of $\bar{\Phi}$ to $\{t_i\} \times \bar{Y}$ is constant (since it factors through a point). By continuity of $\bar{\Phi}$, the limit $\bar{\Phi}|_{\{t\} \times \bar{Y}}$ is also constant, so $\Phi(\{t\} \times Y)$ is a point. Hence $S$ is closed.

To conclude the proof, it suffices to  show that $S$ is open. Let $t \in S$, and choose a Stein neighborhood $U' \subset U$ of the point $u := \bar{\Phi}(\{t\} \times \bar{Y})$. By continuity, there is a neighborhood $T' \subset T$ of $t$ such that $\bar{\Phi}(T' \times \bar{Y}) \subset U'$. For any $t' \in T'$ and any $f \in \mathcal{O}(U')$, the composition $f \circ \Phi|_{\{t'\} \times Y}$ is a bounded holomorphic function on $Y$, hence constant. Since holomorphic functions separate points on $U'$, it follows that $\Phi(\{t'\} \times Y)$ is a single point. Hence $t' \in S$, so $S$ is open.
Thus, as $T$ is connected and $S$ is nonempty, open, and closed, we conclude $S = T$.
\end{proof}

 We now prove Theorem \ref{thm:rig_via_ac} by showing that  an \emph{algebraic} family of pointed maps $(C,c)\to (U,u)$ which, when restricted to any open neighbourhood $\Delta\subset C^{\an}$ is relatively compact in the space of holomorphic maps from $\Delta$ to $U^{\an}$,  is zero-dimensional.

\begin{theorem} \label{thm:rig_via_ac0}  
Let $(U,u)$ be a pointed variety and let $ (C,c)$ be  a smooth quasi-projective pointed curve. Let $H\subset \mathrm{Hom}((C,c),(U,u))$ be a locally closed subset. 
If $\Delta\subset C^{\an}$ is an open neighbourhood containing $c$ such that $H$ is  \textbf{relatively compact} in the set of holomorphic maps $\mathrm{Hol}(\Delta, U^{\an})$ with the compact-open topology, then  $H$ is zero-dimensional.
\end{theorem}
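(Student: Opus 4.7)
My approach is to argue by contradiction, applying the variant of the Rigidity Lemma (Theorem \ref{thm:rigidity_lemma}). Suppose $\dim H \geq 1$. Since $H$ is locally closed in $\Hom((C,c),(U,u))$, its closure $\overline{H}$ has positive dimension; picking an irreducible component $Z$ of $\overline{H}_{\mathrm{red}}$ with $\dim Z \geq 1$ and setting $Y := Z \cap H$ yields a dense open subscheme of $Z$ that is a positive-dimensional (reduced, irreducible) variety all of whose closed points lie in $H$ and hence correspond to pointed morphisms $(C,c) \to (U,u)$.

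The restriction of the evaluation morphism $\ev\colon C \times Y \to U$ to $D \times Y^{\an}$ gives a holomorphic map $\Phi\colon D \times Y^{\an} \to U^{\an}$ satisfying $\Phi(c, y) = u$ for every $y \in Y^{\an}$. Currying produces a map $\iota\colon Y^{\an} \to \mathrm{Hol}(D, U^{\an})$, which is injective by the identity principle on the connected Riemann surface $C^{\an}$ (two algebraic morphisms $C \to U$ agreeing on the disc $D$ agree on $C$), and continuous because $D$ is locally compact Hausdorff so that the compact-open-topology adjunction applies. Because $\iota(Y^{\an}) \subset H$ is relatively compact by hypothesis, its closure $\overline{Y} := \overline{\iota(Y^{\an})}$ in $\mathrm{Hol}(D, U^{\an})$ is compact, and the closed condition ``$\phi(c) = u$'' holds on the dense subset $\iota(Y^{\an})$, hence on all of $\overline{Y}$.

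I would then apply Theorem \ref{thm:rigidity_lemma} with the parameter space, source variety, target, and base points instantiated as $(D, Y, U, c, u)$, and with compactification $\overline{Y} \supset Y^{\an}$. The global evaluation map $\mathrm{Hol}(D, U^{\an}) \times D \to U^{\an}$ is continuous (again by local compactness of $D$), so it restricts to a continuous map $\Phi_2\colon D \times \overline{Y} \to U^{\an}$ extending $\Phi$, and by the previous paragraph $\Phi_2$ contracts $\{c\} \times \overline{Y}$ to $u$. Theorem \ref{thm:rigidity_lemma} then yields that for every $x \in D$ the slice $\{x\} \times Y$ is contracted by $\Phi$. In particular any two closed points of $Y$ correspond to morphisms $C \to U$ that agree on the open disc $D$, and therefore agree on all of $C$ by the identity principle; so $Y$ reduces to a single point, contradicting $\dim Y \geq 1$.

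The step I expect to require the most care is the topological bookkeeping, rather than anything geometric: checking that $\iota$ is a well-defined continuous injection, that the compact-open evaluation really supplies the continuous extension $\Phi_2\colon D \times \overline{Y} \to U^{\an}$, and that the closed condition $\phi(c) = u$ propagates from $\iota(Y^{\an})$ to $\overline{Y}$. Once these are in place, Theorem \ref{thm:rigidity_lemma} does the heavy lifting and the identity principle delivers the contradiction immediately.
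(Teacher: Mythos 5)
Your proof is correct and follows essentially the same route as the paper's: pass to a positive-dimensional irreducible reduced subscheme of $H$, use relative compactness to get a compact closure $\overline{Y}$ in $\mathrm{Hol}(D, U^{\an})$, extend the evaluation map continuously to $D \times \overline{Y}$, apply Theorem \ref{thm:rigidity_lemma}, and conclude via the identity principle. The only cosmetic difference is that you frame it as a contradiction and spell out the topological bookkeeping (continuity of $\iota$, propagation of the closed condition $\phi(c)=u$) in more detail, while the paper takes an arbitrary irreducible reduced $Y \subset H$ and shows directly that it is a point.
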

\begin{proof}  
Let $Y\subset  H$ be an  irreducible   component  and let   $\bar{Y}$ be its   closure     in $\mathrm{Hol}((\Delta,c),(U^{},u))$ with the compact-open topology.  Note that, by assumption,    the topological space $\bar{Y}$ is compact.

The evaluation map $\mathrm{ev}\colon C \times \mathrm{Hom}((C,c),(U,u)) \to U$   is a morphism of schemes and its analytification $\mathrm{ev}^{\an}\colon C^{\an} \times \mathrm{Hom}((C,c),(U,u))^{\an} \to U^{an}$ is holomorphic. The map from $\mathrm{Hom}((C,c),(U,u))^{\an}$ to $\mathrm{Hol}((\Delta,c),(U,u))$ given by restriction is continuous. Consequently, the restricted evaluation   map $\Phi\colon \Delta \times \bar{Y} \to U^{\an}$ is continuous.  (Here we invoke the following  basic property of the compact-open topology: if $A,B,C$ are topological spaces with $B$ locally compact Hausdorff, then  composition $\mathrm{Con}(B,C) \times \mathrm{Con}(A,B) \to \mathrm{Con}(A,C)$ is continuous.) Note that  $\Phi$ extends $\Delta \times Y \to U$ and contracts $\{c\} \times \bar{Y}$ to $\{u\}$.

Since $Y$ is an algebraic variety,  we may apply the   above variant of the Rigidity Lemma (Theorem \ref{thm:rigidity_lemma})   to $\Phi$ and conclude that, for every $x$ in $\Delta$, the subset $\{x\}\times Y^{}$ is contracted to a point in $U^{}$.  In other words, if $f$ and $g$ are elements of $Y$, then $f(x) = g(x)$ for every $x$ in $\Delta$. This implies that $f=g$ and shows that $Y$ is a point.
\end{proof}

  \section{Relative compactness and rigidity of family of pointed maps} \label{section:compactness}
      Recall that, for $U$ a complex space, we let $d_U$ denote the Kobayashi pseudometric.
For a given point $u$ in $U$, we will relate the   rigidity of pointed maps to $U$ to the nondegeneracy of the Kobayashi pseudometric near the point $u$:

\begin{definition}
We say that $U$ is \emph{hyperbolic at $u$} if $d_U$ is nondegenerate on an open neighborhood of $u$. 
\end{definition}

The following result is our main observation concerning the Kobayashi pseudometric $d_U$. Informally,, it relates the nondegeneracy of $d_U$ at a point $u$ to the relative compactness of the space of pointed maps $(\bD,0)\to (U,u)$ after possibly restricting to a small enough open neighbourhood of $0$ in $\bD$.  This observation and its proof are essentially classical, though we are not aware of a direct reference in our specific setting.

 \begin{theorem}\label{thm:nondeg_kob_implies_rel_compactness}
 Let $U$ be a complex space and let $u\in U$. Suppose that $U$ is hyperbolic at $u$.  Then, if $\bD$ is the open unit disc,   there is a neighbourhood $\bD_0 \subset \bD$ of $0$ such that the natural inclusion
    \[
    \Hol((\bD, 0), (U, u)) \longrightarrow \Hol(\bD_0, U), \quad f \mapsto f|_{\bD_0}
    \]
    maps $\Hol((\bD, 0), (U, u))$ to a relatively compact subset of $\Hol(\bD_0, U)$, the latter space endowed with the compact-open topology.
 \end{theorem}
 \begin{proof}   
 
By the triangle inequality (for $d_U$),  the pseudometric $d_U$ is a continuous function. Let $V$ be an open neighbourhood of $u$ such that $d_U$ is nondegenerate on $V$.
  Since $U$ is locally compact (by the definition of a complex space),  the restriction $d_U|_V$ is a genuine metric on $V$ and induces the topology of $V$; see \cite[Theorem~3.1.15]{KobayashiBook} and \cite[Theorem~1.1.8.(1)]{KobayashiBook} for these basic facts.

 Since $d_U|_V$ is a metric on an open neighbourhood $V$ of $u$,  for a sufficiently small $r>0$,  there is a $d_U$-ball neighbourhood $W = B_{d_U}^r(u)$ of radius $r > 0$ centered at $u$ that is     relatively compact in $V$. Let $\bD_0:=\bD^{\rho}_r$ be the Poincaré disc of $d_\bD$-radius $r$ around $0 $ in $\bD$. 
As any holomorphic map $f : (\bD, 0) \to (U, u)$ is $d_U$-distance decreasing, we have $f(\bD^{\rho}_r) \subset W$. 
In particular, every $f \in \mathrm{Hol}((\bD, 0), (U, u))$ restricts to a holomorphic map $f|_{\bD_\rho^r} \colon \bD_\rho^r \to U$ with image in $W$ and, by the distance decreasing property, the family of maps
\[
\mathcal{F} := \left\{ f|_{\bD_\rho^r} \colon \bD_\rho^r \to U \;\middle|\; f \in \mathrm{Hol}((\bD, 0), (U, u)) \right\}
\]
is equicontinuous with respect to $d_U|_V$.

Note that, for every $x$ in $  \bD^{\rho}_r$, the set of images $\{f(x) \ | \ f\in \mathcal{F}\}$ is relatively compact in $U$ (as it is contained in $W$). Thus, by applying the Arzelà--Ascoli theorem with respect to the metric $d_U|_V$ on $V$, the family $\mathcal{F}$ is relatively compact in   $\Hol(\bD^{\rho}_r, V)$, and thus in $ \Hol(\bD^{\rho}_r, U) = \Hol(\bD_0,U)$, as required.  
 \end{proof}

\begin{proof}[Proof of Theorem \ref{thm:almost_compactness}]  
 By our assumption on $U$ and $Z$, the Kobayashi pseudometric $d_U$ is non-degenerate at $u$.  In particular,    by Theorem  \ref{thm:nondeg_kob_implies_rel_compactness},  after choosing a Euclidean open neighborhood $\Delta' \subset C$ of $c$ (which we may choose biholomorphic to the open unit disc), the set of holomorphic maps $\mathrm{Hol}((\Delta',c),(U,u))$ is relatively compact in $\mathrm{Hol}(\Delta, U)$ for some   open neighbourhood $\Delta \subset \Delta'$ of $c$.   
Since  $\mathrm{Hom}((C,c),(U,u))$ is a subset of $\mathrm{Hol}(\Delta,U)$ via restricting a morphism $f\colon C\to U$ to  a holomorphic map $\Delta\to U$, we conclude that the set $\mathrm{Hom}((C,c),(U,u))$   is relatively compact in $\mathrm{Hol}(\Delta, U)$.
\end{proof}

\begin{theorem}\label{thm:from_kob_to_pointed_rigidity}    Let $U$ be a smooth quasi-projective variety which is hyperbolic modulo  a proper (Zariski-)closed subset $\Sigma\subsetneq U$.  Let $u\in U\setminus \Sigma$.       Let $C$ be a smooth quasi-projective curve,  and let $c
\in C$. Then, every pointed morphism $(C,c)\to (U,u)$ is rigid.
\end{theorem} 

\begin{proof}  Combine  Theorem \ref{thm:rig_via_ac0} and Theorem \ref{thm:almost_compactness}.
\end{proof}

 \subsection{Characterizing relative compactness in terms of hyperbolicity} 
If $U$ is a   complex manifold, a  result often attributed to Royden and Kobayashi states that $d_U$ is the path-integrated form of an upper-semicontinuous complex Finsler pseudometric $F_U$ (often called the Kobayashi-Royden pseudometric) defined, for a tangent vector $\vec{e} \in T_u U$ at $u \in U$, by 
\[
F_U(\vec{e})=\frac 1 R, \ \ R=\sup\left\{\, |\zeta|\ \  \big{ |} \  \zeta\in\mathbb{C},  \  f'(0)=\zeta \vec{e},\ f\in \mathrm{Hol}((\bD,0), (U,u))\right\}.\] 

We now prove that the hyperbolicity of $U$ at a point $u$ is \emph{equivalent} to the relative compactness of the family of pointed maps from the open unit disc, after restriction to a suitably smaller disc.  (Note that in this result we need $U$ to be a \emph{smooth} complex space.)

\begin{theorem} \label{thm:equivs}  Let $U$ be a complex manifold embedded as a relatively compact subset of a hermitian complex manifold $(X,h)$ and let $u\in U$. Then the following are equivalent. \\[-3mm]
\begin{enumerate}
\item There is a neighbourhood $V_u$ of $u$ in $U$ and a bound $M>0$ such that for every pointed holomorphic map $f:(\bD,0) \to (U,v)$ with $v\in V_u$, we have $\Vert f'(0)\Vert_h < M.$  
\item  The Kobayashi-Royden pseudometric $F_U$ is uniformly nondegenerate at~$u$, i.e.,  there is a neighbourhood $V$ of $u$ in $U$ and   an $\epsilon > 0$ such that, for all $v\in V,$ and $ \vec{e}\in T_vU$,
\[
F_U(\vec{e})\geq \epsilon \Vert \vec{e}\Vert_h.
\]
\item The complex manifold $U$ is hyperbolic at $u$.  
\item If $\bD$ is the open unit disc, then there is a neighbourhood $\bD_0\subset \bD$ of $0$ such that the natural inclusion 
$$
\mathrm{Hol}((\bD,0), (U,u))\hookrightarrow \mathrm{Hol}(\bD_0, U), \ f\mapsto f|_{\bD_0}
$$
 maps $\mathrm{Hol}((\bD,0), (U,u))$ to a relatively compact subset of $\mathrm{Hol}(\bD_0, U)$ with respect to the compact-open topology.\\[-3mm]
\end{enumerate}
\end{theorem} 

\begin{proof}
First,  note that (1) $\iff$ (2) by the definition of $F_U$.
 If $(2)$ holds, then  $F_U$ is bounded below by $h$ in a neighborhood $V$ of $u$ and thus, for every $v$ in $V$ and $\vec{e} \neq 0$ in $T_vU$, we have that   $F_U(\vec{e}) > 0$. Since $d_U$ is the integrated form of $F_U$ on the   manifold $U$, we see that $d_U(v,w) > 0$ for $v \neq w$ in $V$. This implies $d_U$ is nondegenerate on $V$ and shows that $(2)\implies (3)$.  Since   $(3)\implies (4)$ by Theorem \ref{thm:nondeg_kob_implies_rel_compactness}, it suffices to show that $(4)\implies (1)$.

To show that $(4)\implies (1)$,  we suppose   that $(1)$ is false. Then there exists a sequence of points $v_i \to u$ in $U$ and (pointed) holomorphic maps $f_i : (\bD, 0) \to (U, v_i)$ such that  
\[
\Vert f_i'(0) \Vert_h \to \infty \quad \text{as } i \to \infty.
\]
Let $\bD_0 \subset \bD$ be as in condition (4). Since each $f_i \in \mathrm{Hol}((\bD,0), (U,u))$, restriction defines a sequence $f_i|_{\bD_0} $ in $ \mathrm{Hol}(\bD_0, U)$. By (4), the set of these restricted maps is relatively compact in the compact-open topology. Hence,  after passing to a subsequence, we may assume
$
f_i|_{\bD_0} \to \varphi $  uniformly on compact subsets of $ \bD_0$,  
for some holomorphic map $\varphi : \bD_0 \to U$.
Thus, the derivatives $f_i'(0)$ (computed in a chart and in the Hermitian metric $h$) must remain bounded. But by construction, $\Vert f_i'(0) \Vert_h \to \infty$, which contradicts the existence of such a convergent subsequence. This contradiction shows that (1) must hold.
\end{proof} 

\begin{remark}\label{rem:main_thm_kobayashi_details}  
A variant of Brody’s reparametrization technique, as used in \cite[Prop.~1.19]{Voisin}, can also be adapted to give the equivalence of (1) and (3) (even in the noncompact case).
\end{remark}

  \section{Finiteness results}\label{section:final} 
 In the previous section we always assumed the source of our maps to be a curve.  For proving   finiteness of pointed maps this is harmless, as the following simple lemma shows.
 
   \begin{lemma}[Reduction   to pointed curves]\label{lem:geom_hyp_on_curves}   Let $\Sigma$ be a proper closed subset of a variety $U$ and $u\in U\setminus \Sigma$. Suppose that, for every  smooth quasi-projective   pointed curve $(C,c)$,  the set of morphisms $f\colon C\to U$ with $f(c) = u$ is finite. Then, for every   pointed variety $(Y,y)$  the set of morphisms $f\colon Y\to U$ with $f(y)=u$ is finite.
 \end{lemma}
 \begin{proof} We argue by contrapositive.  Thus, suppose that there exists a variety  $Y$, a point $y\in Y$,  and pairwise distinct morphisms  $f_1,f_2,\ldots$ from $Y$ to $U$ which map $y$ to $u$. For $i$ and $j$ positive integers, we let $Y^{i,j}\subset Y$ be the closed subset    of points  $P$   such that $f_i(P) = f_j(P)$. Note that, for all $i\neq j$, the subset $Y^{i,j}$ is a proper closed subset of $Y$. Moreover, the subset $f_i^{-1}(\Sigma)$ is also a proper closed subset, as the image of $f_i$ contains the point $u$ and $u$ is not in $\Sigma$.   Since $\CC$ is uncountable, we may choose a (closed) point $w$ in  $$Y\setminus \cup_{i\neq j} Y^{i,j} \bigcup \cup_i f_i^{-1}(\Sigma). $$    Let $C$ be a smooth  quasi-projective  connected   curve and let $g\colon C\to Y$ be a morphism whose image contains   $w$ and $y$. Then the morphisms $f_1\circ g, f_2 \circ g, \ldots$ are   pairwise distinct morphisms from $C$ to $U$ and send $y$ to $u$. This concludes the proof.
 \end{proof}
 
 We now come to the proof of the main finiteness result for pseudohyperbolic varieties:
    
     \begin{proof}[Proof of Theorem \ref{thm:geometric_hyperbolicity_intro}]
     We may assume that $Y$ is a smooth quasi-projective curve (Lemma \ref{lem:geom_hyp_on_curves}). Now, since  $U$ is bounded modulo $\Sigma$,  the Hom scheme $\mathrm{Hom}(Y,U)$ is of finite type. It follows that $\mathrm{Hom}((Y,y),(U,u))$ is of finite type. Since $U$ is hyperbolic modulo $\Sigma$, the Hom scheme $\mathrm{Hom}((Y,y),(U,u))$ is zero-dimensional.  We conclude that $\mathrm{Hom}((Y,y),(U,u))$ is a finite type zero-dimensional scheme,  hence finite.
     \end{proof}

To prove the main finiteness result for base spaces (Theorem \ref{thm1}), we invoke the pseudohyperbolicity of base spaces of families with maximal variation in moduli: 
 
 \begin{theorem}  \label{thm:deng}
  Let $U$  be a   quasi-projective   variety.   If  there exists   a smooth projective family    $V\to U$ of varieties with semi-ample canonical bundle of maximal variation, then $U$ is pseudohyperbolic.
 \end{theorem} 

This is  proven by Deng \cite[Theorem~B]{Deng},  and is   the culmination of the work of  Viehweg-Zuo \cite{VZ03} (whose constructions are greatly inspired by \cite{Demailly} and \cite{LY}),  the works of Schumacher \cite{Schumacher2, Schumacher}, To-Yeung \cite{ToYeung},  M\"uller-Stach--Sheng--Yen--Zuo \cite{MSYZ},   Popa-Schnell \cite{Popa-Sch},  and Popa-Taji-Wu \cite{PopaTajiWu}.  
 
   \begin{proof}[Proof of Theorem \ref{thm1}]  By Theorem \ref{thm:deng} and Theorem \ref{thm:bounded}, the variety $U$ is pseudohyperbolic and pseudo-bounded. Thus, the desired finiteness result follows from Theorem \ref{thm:geometric_hyperbolicity_intro}.
   \end{proof}

 \section{Applications: Dimension bound and inheriting   hyperbolicity}
\label{section:apps}

The results of this paper, particularly the finiteness of pointed maps (Theorem \ref{thm:geometric_hyperbolicity_intro}), contribute to the broader understanding of hyperbolic geometry for quasi-projective varieties. Such results support the conjectural framework connecting various notions of hyperbolicity, often defined via complex analytic  tools like families of maps from the disc $\bD$ (see Section \ref{section:compactness}). This is encapsulated by the following extension of the Lang--Vojta conjectures.

\begin{conjecture_env} \label{conj:merged_LV} If $U$ is a    variety, then the following are equivalent.
\begin{enumerate}
\item The variety $U$ is   pseudohyperbolic  
\item There is a proper closed subset $\Sigma\subset U$ such  that every  entire  curve $\mathbb{C}\to U^{}$ factors over  $\Sigma^{}$.
\item   There is a proper closed subset $\Sigma \subset U$ such that, for every    pointed variety $(Y,y)$  and every $u\in U\setminus \Sigma $, the set  $\mathrm{Hom}((Y,y),(U,u))$ is finite.
\item The variety $U$ is of log-general type.
\item  The variety $U$ is pseudo-bounded.
\end{enumerate}
\end{conjecture_env}

A fundamental expectation is that $\mathrm{Hom}(C,U)$ should inherit hyperbolicity from $U$. The following remark illustrates this for Brody hyperbolicity.
 
\begin{remark}[Motivation]\label{remark:Brody}
Let $U$ be a Brody hyperbolic variety. Then, for every smooth quasi-projective curve $C$ and every irreducible subvariety $M \subset \mathrm{Hom}(C, U)$, the variety $M$ is also Brody hyperbolic. 
To see this, let 
$f \colon \mathbb{C} \to M^{}$ 
be a holomorphic map. For each point $c \in C$, consider the composition of $f$ with the analytification of the evaluation map
$\mathrm{ev}_c \colon M \to U.$
Since $U$ is Brody hyperbolic, this composition is constant for each $c$ in $C$, which readily implies that $f$ itself must be constant. This completes the proof.
\end{remark}

Rigidity of pointed maps to a pseudohyperbolic   variety (Theorem \ref{thm:from_kob_to_pointed_rigidity}) allows us to establish the inheritance property for  hyperbolic varieties, as well as the dimension drop. 

\begin{theorem}\label{thm:dimension0b}
Let $U$ be a variety and let $\Sigma\subsetneq U$ be a proper closed subset such that $U$ is   hyperbolic modulo $\Sigma$. Then,  for every smooth quasi-projective curve $C$ and  every irreducible subvariety   $M$ of $\mathrm{Hom}(C,U)$, the
  variety $M$ is   hyperbolic   modulo $\mathrm{Hom}(C,\Sigma)\cap M$. If in addition $U$ is of log-general type, then the Hom scheme $\mathrm{Hom}^{nc}(C,U)$   of non-constant maps from $C$ to $U$ satisfies
\[
\dim \left(\mathrm{Hom}^{nc}(C,U)\setminus \mathrm{Hom}(C,\Sigma)\right) \leq \dim U - 1.
\]
\end{theorem}
\begin{proof}  Since $M$ is of finite type, 
by Theorem \ref{thm:from_kob_to_pointed_rigidity}, for every $c$ in $C$, the evaluation $\mathrm{ev}_c:M\to U$ has zero-dimensional, and thus   finite,  fibres over $U\setminus \Sigma$. This implies that, for every $c$ in $C$, the variety $M$ is   hyperbolic   modulo the    closed subset $\mathrm{ev}_c^{-1}(\Sigma)$. In particular, $M$ is   hyperbolic modulo \[
  \bigcap_{c\in C} \mathrm{ev}_c^{-1}(\Sigma)=  \mathrm{Hom}(C,\Sigma)\cap M. \]
  
Now,  for every $c$ in $C$, we have the evaluation map $\mathrm{ev}_c:M\to U$ defined by $f\mapsto f(c)$.  Note that there exists a point $c$ in $C$ such that $\mathrm{ev}_c(M)\not\subset \Sigma$.  In particular, since   $\mathrm{ev}_c$ has finite fibres over $U\setminus \Sigma$ (Theorem \ref{thm:from_kob_to_pointed_rigidity}), it is generically finite onto its image, so that  $\dim M   \leq \dim U$.

 We  now show that $\dim M < \dim U$.  Since   $\dim M \leq \dim U$, it suffices to show that $\dim M \neq \dim U$.  To do so, we argue by contradiction.   
   
   Thus, suppose that $\dim M = \dim U$. Let $\Sigma_0$ be the set of $c$ in $C$ such that $\mathrm{ev}_c(M)\not\subset \Sigma$.  Note that $\Sigma_0$ is infinite.  Now,   for $c$ in $\Sigma_0$, as $\mathrm{ev}_c:M\to U$ is generically finite onto its image and $\dim M = \dim U$,     we have that $\mathrm{ev}_c:M\to U$ is dominant.
Since $U$ is of log-general type (by assumption),  by Tsushima's    finiteness theorem for varieties of log-general type \cite{Tsushima}, the set of dominant morphisms $M\to U$ is finite.   

In particular, the set    $\{\mathrm{ev}_c \ | \ c\in \Sigma_0\}$ is finite, as every $\mathrm{ev}_c$ is dominant for $c\in \Sigma_0$. This implies that there is an infinite (hence dense) subset $\Sigma_2\subset \Sigma_0\subset C$ such that, for every $c$ and $d$ in $\Sigma_2$, we have that $\mathrm{ev}_{c}=\mathrm{ev}_d$. This implies that   every $f$ in $M$ maps $C$ to a single point, and is thus constant, contradicting the fact that $M\subset \mathrm{Hom}^{nc}(C,U)$.    This   concludes the proof.
\end{proof}
  
\begin{remark}
Let $U$ be as in Theorem \ref{thm:dimension0b}. Then,  using the main result of \cite{DengLuSunZuo},  one can show that there is a proper closed subset $\Sigma\subsetneq U$ such that, for every smooth quasi-projective curve $C$ and every irreducible variety $M$ of $\mathrm{Hom}(C,U)$, the variety $M$ is ``Borel hyperbolic'' modulo    $\mathrm{Hom}(C,\Sigma)\cap M$.
\end{remark}

\subsection{Families with maximal variation}

We will apply Campana--P\u{a}un's theorem (formerly Viehweg's hyperbolicity conjecture) to prove inheritance properties and dimension bounds for the base space of a maximally varying family of polarized varieties.
 
\begin{theorem}[Campana--P\u{a}un] \label{thm:CP} 
Let  $U$ be a variety. Assume that  there exists a    smooth projective   family   $V\to U$  of varieties with semi-ample canonical bundle   which has maximal variation in moduli. Then  there is a proper closed subvariety $\Sigma\subsetneq U$ such that, for every closed subvariety $U'\subset U$ not contained in $\Sigma$, we have that $U'$ is of log-general type.
\end{theorem}
\begin{proof} Since $ V\to U$ has  maximal variation in moduli, there is a proper closed subset $\Sigma\subsetneq U$ such that, for every $u$ in $U\setminus \Sigma$, the Kodaira-Spencer map of $V\to U$ is injective at $u$.  In particular, if $U'\subset U$ is a subvariety of $U$ not contained in $\Sigma$, then the   restricted family $V\times_U U'\to U'$   is   a smooth projective family of varieties with semi-ample canonical bundle   which (still) has     maximal variation in moduli.  Thus, to prove the theorem, 	 it suffices to show that $U$ is of log-general type. To do so,  we may and do assume that $U$ is smooth.  Choose a smooth projective compactification $X$ of $U$ with boundary divisor $D=X\setminus U$ a simple normal crossings divisor.  Now,  the result follows from Campana--P\u{a}un's theorem \cite[Corollary~4.6]{CP} if the fibres are canonically polarized. However,  as noted in  \cite[Section~A.2]{Popa-Sch}, the  proof of Campana--P\u{a}un also works when the fibres have semi-ample canonical bundle.  More precisely,  by  \cite[Theorem~1.4.(iii)]{VZ02},   some tensor power of $\Omega^1_X(\log D)$  contains a subsheaf with big determinant.   In particular, by \cite[Theorem~4.1]{CP} (or \cite[Theorem~1]{Sch17}), the divisor $K_X+D$ is big, so that the smooth variety $U$ is of log-general type.
\end{proof}

\begin{theorem}\label{thm:inheritance_main2}    
Let $U$ be a variety. Assume that there is a     smooth projective family   $V \to U$ of varieties with semi-ample canonical bundle which has maximal variation in moduli.  Then, there is a proper closed subset $\Sigma\subsetneq U$ such that, for every smooth curve $C$ the following statements hold. 
\begin{enumerate}
\item For  every irreducible subvariety   $M$ of $\mathrm{Hom}(C,U)$, any subvariety of $M$ not contained in   $\mathrm{Hom}(C,\Sigma)\cap M$ is of log-general type and  
    $M$ is   hyperbolic   modulo $\mathrm{Hom}(C,\Sigma)\cap M$.
    \item We have that   \[
\dim \left(\mathrm{Hom}^{nc}(C,U)\setminus \mathrm{Hom}(C,\Sigma)\right) \leq \dim U - 1.
\] 
    \end{enumerate} 
\end{theorem}
\begin{proof}  By Theorem \ref{thm:deng}, we may choose $\Sigma\subsetneq U$  such that $U$ is hyperbolic modulo $\Sigma$.  The fact that $M$ is  hyperbolic   modulo $\mathrm{Hom}(C,\Sigma)\cap M$ then follows from Theorem \ref{thm:dimension0b}.

 Now, by Theorem \ref{thm:CP},   replacing $\Sigma$ by a larger proper closed subset if necessary, every subvariety of $U$ not contained in $\Sigma$ is of log-general type.  By Theorem \ref{thm1} (or Theorem \ref{thm:from_kob_to_pointed_rigidity}),  replacing $\Sigma$ by a larger proper closed subset if necessary,  for every  smooth curve $C$ and every $c$ in $C$, the evaluation map $\mathrm{ev}_c:M\to U$ is quasi-finite over $U\setminus \Sigma$. In particular, for every $c$ in $C$, every subvariety of $M$ not contained in    $\mathrm{ev}_c^{-1}(\Sigma)$ is of log-general type.  Thus, every subvariety of $M$ not contained in  $
 \bigcap_{c\in C} \mathrm{ev}_c^{-1}(\Sigma) =  \mathrm{Hom}(C,\Sigma)\cap M$ is of log-general type, as required.
 
 Finally,   since $U$ is of log-general type (Theorem \ref{thm:CP}), the dimension inequality  follows from     Theorem \ref{thm:dimension0b}.
\end{proof}

  If a family  of canonically polarized varieties $ V\to U$ has maximal variation in moduli, the associated moduli map  can have positive-dimensional fibres (consider a blowing-up of $U$), so  that one can not expect a bound on the dimension of $\mathrm{Hom}^{nc}(C,U)$ without excluding maps into $\Sigma$. That is, one can only bound the dimension of $\mathrm{Hom}^{nc}(C,U)\setminus \mathrm{Hom}(C,\Sigma)$.
     Indeed, the exceptional locus $\Sigma$ could very well contain a compact rational curve (in which case $\mathrm{Hom}(\mathbb{P}^1,\Sigma)$ has infinitely many connected components of unbounded dimension).

\begin{proof}[Proof of Theorem \ref{thm:dimension_drop_intro}] This follows from induction on $\dim U$ and Theorem \ref{thm:inheritance_main2}.    
\end{proof}

\subsection{Families with quasi-finite moduli map and stronger results}
 
Under stronger hypotheses, such as for effectively parametrized families, the finiteness of pointed maps (Theorem \ref{thm1}) holds without an exceptional set $\Sigma$.
 In \cite[Theorem~C]{Deng} (see also \cite{ToYeung}),  Deng proves the   hyperbolicity of the base space $U$ of a smooth projective family $V\to U$ of     smooth projective varieties  with ample canonical bundle, assuming that $U$ is smooth and the  family is \emph{effectively parametrized}, i.e., for every point $u$ in $U$, the Kodaira-Spencer map
 \[\rho_u\colon 	T_{U,y}\to \mathrm{H}^1(V_u, T_{V_u})\] is injective.  
 
\begin{corollary} 
 Let $U$ be a smooth variety such that there exists an effectively parametrized smooth projective family $V\to U$ of     varieties with ample canonical bundle.Then, for every pointed variety $(Y,y)$ and    $u$ in $U$, the set of pointed morphisms $ (Y,y)\to (U,u)$  is finite.   
\end{corollary}
\begin{proof}  To show that $U$ is bounded (see Definition \ref{def:bounded}), it suffices to show that every subvariety of $U$ is pseudo-bounded.  If $U'$ is a subvariety of $U$,  let $U''\to U'$ be a resolution of singularities. Since the moduli map $U\to \mathcal{CP}^c$ of the family $V\to U$ to the coarse moduli space $\mathcal{CP}^c$ of canonically polarized smooth proper varieties is quasi-finite (as the family is effectively parametrized),  the family $V\to U$ induces a family $V''\to U''$ of maximal variation in moduli.  Thus, $U''$ is pseudo-bounded by Theorem \ref{thm:bounded}, so that $U'$ is pseudo-bounded by the  birational invariance of pseudo-boundedness.
 Since $U$ is hyperbolic \cite[Theorem~C]{Deng},  we see that   Theorem \ref{thm:geometric_hyperbolicity_intro}   applies with $\Sigma=\emptyset$. 
\end{proof}

\begin{remark}\label{remark:murphy}
The assumption that $U$ is a smooth variety \emph{and} effectively parametrizes some polarized family is quite restrictive. Indeed, the singularities of the moduli stack of canonically polarized varieties satisfy Murphy's law \cite{Vakil}; more precisely, every possible type of singularity appears on the stack of smooth proper canonically polarized varieties. Consequently, there exist components of this stack that do not admit any unramified surjective morphism from a smooth variety.
\end{remark}

 \bibliography{refsci_new}{}
\bibliographystyle{alpha}

\end{document}